\DeclareFontFamily{OT1}{pzc}{}
\DeclareFontShape{OT1}{pzc}{m}{it}{<-> s * [1.35] pzcmi7t}{}
\DeclareMathAlphabet{\mathcal}{OT1}{pzc}{m}{it}
\newcommand \al{\alpha}
\newcommand \be{\beta}
\newcommand \bs{\backslash}
\newcommand \CA{\mathcal{A}}
\newcommand \CC{\mathcal{C}}
\newcommand \CD{\mathcal{D}}
\newcommand \cent{\operatorname{cent}}
\newcommand \eps{\varepsilon}
\newcommand \F{\mathbb{F}}
\newcommand \ga{\gamma}
\newcommand \Ga{\Gamma}
\newcommand \GL{\operatorname{GL}}
\newcommand \la{\lambda}
\newcommand \La{\Lambda}
\newcommand \M{\operatorname{M}}
\newcommand \PSL{\operatorname{PSL}}
\newcommand \R{{\mathbb R}}
\newcommand \SL{\operatorname{SL}}
\newcommand \tr{\operatorname{tr}}
\newcommand \Z{{\mathbb Z}}
\renewcommand \H{\mathbb{H}}
\renewcommand \1{{\bf 1}}
\renewcommand \({\big(}
\renewcommand \){\big)}
\renewcommand \[{\left(}
\renewcommand \]{\right)}
\newcommand{\e}
[1]{\emph{#1}\index{#1}}
\newcommand{\smat}
[4]{\(\begin{smallmatrix}#1 & #2 \\ #3 & #4\end{smallmatrix}\)}
\renewcommand{\sp}
[1]{\left\langle #1\right\rangle}
\newtheorem{theorem}{Theorem}[section]
\newtheorem{lemma}[theorem]{Lemma}
\newtheorem{corollary}[theorem]{Corollary}
\theoremstyle{definition}
\newtheorem{definition}[theorem]{Definition}
\begin{document}

\pagestyle{myheadings} \markright{Distribution of traces}

\title{Distribution of  prime geodesic traces}
\author{Anton Deitmar}
\date{}
\maketitle

{\bf Abstract:}
This note complements a recent paper of Chatzakos, Harcos and Kaneko \cite{CHK}.
We use a Dirichlet style Prime Geodesic Theorem to improve the error term estimate in loc. cit. at the cost of lowering the resolution. 
This outcome is obtained by using available results from literature and a little calculation.

$$ $$

\tableofcontents


\section*{Introduction}

For $x>0$ let 
$$
\psi(x)=\sum_{u\le x}h(u)\log(u_0),
$$
where the sum runs over all norm one units $u>1$ of real quadratic orders, $h(u)$ is the class number and $u_0>1$ is the fundamental unit in the narrower sense.
It is known \cite{SY} that 
$
\psi(x)=x+O_\eps\[x^{25/36+\eps}\]
$
holds for $x\to\infty$.
For given $u$, the trace $\tr(u)=u+u^c=u+\frac1u$ is a positive integer and 
we are concerned with the distribution of these numbers modulo a given prime number $p\ge 3$.
For  given $a\in\F_p$, define
$$
\psi_{a}(x)= \sum_{\substack{1<u\le x\\ \tr(u)\equiv a\ (p)}}h(u)\log(u_0).
$$
In the light of Dirichlet's Prime Number Theorem, one might expect an equidistribution of the residue classes, however, in \cite{CHK} it is shown that
$$
\psi_{a}(x)=\begin{cases}\frac x{p-1}+O_\varepsilon\big(x^{\frac34+\frac1{12}+\varepsilon}\big)
&\text{if }\left(\frac{a^2-4}p\right)=1,\\
\frac x{p+1}+O_\varepsilon\big(x^{\frac34+\frac1{12}+\varepsilon}\big)
&\text{if }\left(\frac{a^2-4}p\right)=-1,\\
\frac {px}{p^2-1}+O_\varepsilon\big(x^{\frac34+\frac1{12}+\varepsilon}\big)
&\text{if }\left(\frac{a^2-4}p\right)=0.\\
\end{cases}
$$
To prove this, the authors  consider binary quadratic forms and their connection to Dirichlet L-functions given by the class number formula.

In this paper, we use results of Selberg and Hejhal to get a better error term estimate at the cost of having to combine the contributions of $a$ and $-a$.
Choosing an integral basis of a quadratic number field, i.e., an embedding into $\GL(2)$, the unit $u$ gives an element of $\SL_2(\Z)$.
An asymtotic result by Hejhal \cite{Hej2} directly yields an equidistribution law, which one might call \e{Dirichlet Prime Geodesic Theorem} and which, in the cocompact case, also is contained in Peter Sarnak's PhD thesis.
Based on the Dirichlet Theorem, the distribution of  traces is easily determined by an exercise in linear algebra, determining the conjugacy classes in the group $\SL_2(\F_p)$.
In this way, we bring the above error term down to $O\(x^{\frac34}\log x\)$, at least for $\psi_a+\psi_{-a}$ in place of $\psi_a$.

\section{Statement of the main assertion}

In the introduction, we have used the dictionary, which relates units in quadratic orders to hyperbolic elements of $\SL_2(\Z)$ or $\PSL_2(\Z)$.
This dictionary is obtained through the choice of an integral basis of a number field.
See \cite{DH} for further explanations.
For the rest of the paper we shall express these items in terms of the arithmetic groups  $\SL_2(\Z)$ and $\PSL_2(\Z)$.

\begin{definition}
For a hyperbolic conjugacy class $[\ga]$ in $\SL_2(\Z)$ we let $\ell(\ga)$ denote the length of the geodesic in the upper half plane $\H$, which is closed by $\ga$.
Further, let $\ga_0$ denote the underlying primitive hyperbolic element.
\end{definition}

\begin{definition}
For a prime number $p$,  an element $a\in\Z/p\Z$ and $x>0$ let
\begin{align*}
\psi^\pm_a(x)=\frac12\sum_{\substack{[\ga]\subset\SL_2(\Z)\\ \ell(\ga)\le\log x\\ \tr(\ga)\equiv a\ (p)}}\ell(\ga_0),
\end{align*}
where the sum runs over  hyperbolic conjugacy classes $[\ga]$ in $\SL_2(\Z)$.
\end{definition}

\begin{lemma}
We have 
$$
\psi^\pm_a=\frac{\psi_a+\psi_{-a}}2.
$$
\end{lemma}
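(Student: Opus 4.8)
The plan is to transport everything to the group $\SL_2(\Z)$ by means of the dictionary recalled above (see \cite{DH}) and then to play the two conjugacy classes $[\gamma]$ and $[-\gamma]$ off against each other. Under that dictionary the quantity $\psi(x)=\sum_{u\le x}h(u)\log(u_0)$ of the introduction is nothing but the prime geodesic sum, namely the sum of $\ell(\gamma_0)$ over the hyperbolic conjugacy classes $[\gamma]\subset\SL_2(\Z)$ with $\tr(\gamma)>2$ and $\ell(\gamma)\le\log x$ (this is precisely what makes the estimate of \cite{SY} applicable). Since the dictionary matches $\tr(\gamma)$ with $\tr(u)$, adjoining the congruence $\tr(\gamma)\equiv a\ (p)$ to this sum is the same as imposing $\tr(u)\equiv a\ (p)$ on the unit side, so
$$
\psi_a(x)=\sum_{\substack{[\gamma]\subset\SL_2(\Z),\ \tr(\gamma)>2\\ \ell(\gamma)\le\log x,\ \tr(\gamma)\equiv a\ (p)}}\ell(\gamma_0).
$$
A hyperbolic matrix has $|\tr|>2$, hence every hyperbolic conjugacy class of $\SL_2(\Z)$ has trace $\ge 3$ or $\le -3$; the classes with $\tr>2$ and those with $\tr<-2$ together exhaust all of them.

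Next I would bring in the involution $[\gamma]\mapsto[-\gamma]$. Since $-\Id$ is central, this is a well-defined involution on the set of hyperbolic conjugacy classes of $\SL_2(\Z)$; it exchanges the two families $\{\tr>2\}$ and $\{\tr<-2\}$ and it replaces $\tr(\gamma)$ by $-\tr(\gamma)$. The essential point is that it leaves $\ell(\gamma)$ and $\ell(\gamma_0)$ unchanged: $-\Id$ acts trivially on $\H$, so $\gamma$ and $-\gamma$ translate along the same axis by the same hyperbolic distance and therefore close the same geodesic; consequently the underlying primitive closed geodesic, and with it $\ell(\gamma_0)$, depends only on the image of $\gamma$ in $\PSL_2(\Z)$, which is unaffected by $\gamma\mapsto-\gamma$. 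This is the one spot that calls for a little care, because being primitive as an element of $\SL_2(\Z)$ is subtler than primitivity of the associated closed geodesic; what enters the sums is the geodesic length $\ell(\gamma_0)$, and that is visibly a function on $\PSL_2(\Z)$.

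To finish, I would write down the displayed identity with $-a$ in place of $a$ and apply the involution: the sum over classes with $\tr>2$ and $\tr\equiv-a\ (p)$ turns into the sum over classes with $\tr<-2$ and $\tr\equiv a\ (p)$, the summand $\ell(\gamma_0)$ being preserved. Adding this to the identity for $\psi_a(x)$, and using that the families $\{\tr>2\}$ and $\{\tr<-2\}$ exhaust all hyperbolic classes, yields
$$
\psi_a(x)+\psi_{-a}(x)=\sum_{\substack{[\gamma]\subset\SL_2(\Z)\ \mathrm{hyperbolic}\\ \ell(\gamma)\le\log x,\ \tr(\gamma)\equiv a\ (p)}}\ell(\gamma_0)=2\,\psi^\pm_a(x)
$$
by the very definition of $\psi^\pm_a$, and dividing by $2$ gives the asserted identity. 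The only genuine obstacle is the bookkeeping around $\ell(\gamma_0)$ under the involution described above; once that is settled, the rest is formal.
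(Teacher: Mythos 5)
Your argument is correct and is essentially the paper's own proof, spelled out in more detail: the paper likewise observes that the classes with positive trace account for $\psi_a$, that $[\gamma]\mapsto[-\gamma]$ carries them to the negative-trace classes contributing $\psi_{-a}$, and that both families together exhaust the sum defining $\psi_a^\pm$. Your extra care about $\ell(\gamma_0)$ being a function of the image in $\PSL_2(\Z)$ is a worthwhile addition but does not change the route.
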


\begin{proof}
In the sum defining $\psi_a$ we only considered $u>1$, which corresponds to the conjugacy classes $[\ga]$ with $\tr(\ga)>0$. As all classes occur in the sum $\psi_a^\pm$, the lemma follows.
\end{proof}

\begin{theorem}\label{thmMain}
If $p\ge 3$, one has, as $x\to\infty$, \label{thmMain}
$$
\psi^\pm_{a}(x)=\begin{cases}\frac x{p-1}+O\(x^{\frac34}(\log x)^{\frac12}\)
&\text{if }\left(\frac{a^2-4}p\right)=1,\\
\frac x{p+1}+O\(x^{\frac34}(\log x)^{\frac12}\)
&\text{if }\left(\frac{a^2-4}p\right)=-1,\\
\frac {px}{p^2-1}+O\(x^{\frac34}(\log x)^{\frac12}\)
&\text{if }\left(\frac{a^2-4}p\right)=0.\\
\end{cases}
$$
For the prime $p=2$ we have 
$$
\psi^\pm_{a}(x)=\psi_{a}(x)=\begin{cases}
\frac13x+O\(x^{\frac34}(\log x)^{\frac12}\)
&\text{if }a=1,\\
\frac 23x+O\big(x^{\frac34}(\log x)^{\frac12}\big)
&\text{if }a=0.\\
\end{cases}
$$
\end{theorem}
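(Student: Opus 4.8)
The plan is to separate the problem into two independent pieces: an analytic input (a Prime Geodesic Theorem with congruence conditions on the trace, following Hejhal) and an algebraic computation (counting which residue classes $a$ mod $p$ can occur as traces of hyperbolic elements, weighted by how many conjugacy classes in $\SL_2(\F_p)$ have a given trace). First, I would recall Hejhal's asymptotic: if one restricts the sum $\psi^\pm$ to conjugacy classes $[\ga]$ whose image in $\SL_2(\Z/p\Z)$ lies in a fixed conjugacy class $C$, then one gets $\frac{|C|}{|\SL_2(\F_p)|}\,x$ as main term, with error $O\!\left(x^{3/4}(\log x)^{1/2}\right)$ — this is exactly the Dirichlet Prime Geodesic Theorem alluded to in the introduction, valid because the principal congruence subgroup $\Ga(p)$ is a finite-index normal subgroup of $\SL_2(\Z)$ and Hejhal's machinery applies to the quotient. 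One must check that hyperbolic classes are equidistributed among \emph{all} conjugacy classes of $\SL_2(\F_p)$ (including non-split ones), which is where the factor $|C|/|\SL_2(\F_p)|$ comes from; since $|\SL_2(\F_p)| = p(p^2-1)$, summing over all $C$ with a fixed trace value $a$ gives the stated main terms provided the algebra below checks out.

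Next comes the linear-algebra exercise. For $p \ge 3$ and $a \in \F_p$, I would enumerate the conjugacy classes in $\SL_2(\F_p)$ with trace $a$ and compute the total number of elements (i.e. $\sum_C |C|$ over those classes). The characteristic polynomial is $T^2 - aT + 1$, with discriminant $a^2-4$. When $\left(\frac{a^2-4}{p}\right)=1$ the element is split (diagonalizable over $\F_p$ with distinct eigenvalues); there is one such conjugacy class, of size $|\SL_2(\F_p)|/(p-1) = p(p+1)$, giving main term $\frac{p(p+1)}{p(p^2-1)}x = \frac{x}{p-1}$. When $\left(\frac{a^2-4}{p}\right)=-1$ the element is non-split (eigenvalues in $\F_{p^2}\setminus\F_p$); one conjugacy class of size $|\SL_2(\F_p)|/(p+1) = p(p-1)$, giving $\frac{p(p-1)}{p(p^2-1)}x = \frac{x}{p+1}$. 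When $a^2 - 4 \equiv 0$, i.e. $a = \pm 2$, the element has a repeated eigenvalue $\pm 1$; besides the central element $\pm I$ (which is not hyperbolic and does not contribute) there are two unipotent-type classes, each of size $(p^2-1)/2$, totalling $p^2-1$ elements, giving main term $\frac{p^2-1}{p(p^2-1)}x = \frac{x}{p} = \frac{px}{p^2-1}\cdot\frac{p^2-1}{p^2}$ — wait, this needs care: $\frac{p^2-1}{p(p^2-1)} = \frac1p$, whereas the claim is $\frac{px}{p^2-1}$; the discrepancy is resolved because for $a=\pm2$ one must also account for the scalar contribution or reexamine the class sizes, and I expect the correct count to yield exactly $\frac{p}{p^2-1}$ after including both non-identity classes correctly (the two non-central square-unipotent classes have total size $p^2 - 1$, but one divides by $|\SL_2|=p(p^2-1)$... the claim forces the total to be $p^2$, so there must be additional classes I'm undercounting — this reconciliation is the point to be careful about).

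For $p = 2$, I would argue directly: $\SL_2(\F_2) \cong S_3$ has order $6$, traces take values in $\{0,1\}$, and $\psi^\pm_a = \psi_a$ here since $a \equiv -a \pmod 2$ (by the Lemma). Trace $1$ corresponds to the $3$-cycles (two of them, forming one conjugacy class of size $2$), trace $0$ to the transpositions (one class of size $3$) together with the identity (not hyperbolic); so the main terms are $\frac{2}{6}x = \frac13 x$ and $\frac{3+1}{6}x$ or $\frac{4}{6}x = \frac23 x$ depending on whether the identity's coset contributes — again a bookkeeping point. \textbf{The main obstacle} I anticipate is precisely this reconciliation of class sizes in the ramified case $a^2 \equiv 4$: one must correctly identify \emph{all} conjugacy classes (central, and the non-central classes with a single Jordan block) lying over a given trace, track which are hyperbolic, and confirm the arithmetic yields exactly the claimed rational coefficients $\frac{x}{p-1}$, $\frac{x}{p+1}$, $\frac{px}{p^2-1}$; once the class-size accounting is pinned down, plugging into Hejhal's theorem and summing the error terms (finitely many, each $O(x^{3/4}(\log x)^{1/2})$) completes the proof.
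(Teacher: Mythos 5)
Your overall strategy coincides with the paper's: Hejhal's Dirichlet-type prime geodesic theorem applied to the congruence cover $\Gamma(p)\subset\SL_2(\Z)$, combined with an enumeration of the conjugacy classes of $\SL_2(\F_p)$ by trace. Your split and non-split computations are correct. But there is a genuine gap at exactly the point you flag, and your proposed resolution points in the wrong direction. In the ramified case $a\equiv\pm 2\pmod p$ you discard the central class $\{\pm I\}$ on the grounds that the central element ``is not hyperbolic and does not contribute.'' This conflates the element $\pm I\in\SL_2(\F_p)$ with the hyperbolic elements of $\SL_2(\Z)$ being counted: what matters is the set of hyperbolic $\ga\in\SL_2(\Z)$ whose reduction mod $p$ equals $\pm I$, i.e.\ the hyperbolic elements of $\pm\Gamma(p)$, and these are equidistributed into the class $\{\pm I\}$ with density $1/(p(p^2-1))$ just like any other class. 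Including this single extra element turns your count of $p^2-1$ elements of trace $\pm 2$ into $p^2$, and $\frac{p^2}{p(p^2-1)}=\frac{p}{p^2-1}$ is exactly the claimed coefficient; there are no ``additional classes you are undercounting.'' The same correction settles your $p=2$ bookkeeping: the identity class does contribute, giving $4$ elements of trace $0$ and the coefficient $\frac{4}{6}=\frac{2}{3}$.

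Two smaller points. First, the clean error term $O\big(x^{3/4}(\log x)^{1/2}\big)$ with no secondary main terms requires knowing that the relevant congruence Laplacians have no small eigenvalues producing exponents $s_j\in(3/4,1)$; Hejhal's theorem by itself leaves terms $x^{s_j}/s_j$ that would dominate the stated error. The paper invokes the Luo--Rudnick--Sarnak bound (Selberg's $3/16$ bound would also suffice) to remove them, and your write-up should make this input explicit. Second, Hejhal's theorem is stated for lattices in $\PSL_2(\R)$, so one must track the passage from conjugacy classes of $\PSL_2(\F_p)$ to those of $\SL_2(\F_p)$ (the classes $\CD$ and $-\CD$ and whether they coincide). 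The resulting factor of $2$ happens to cancel against the $\frac12$ in the definition of $\psi^\pm_a$, yielding the density $|\CD|/|\SL_2(\F_p)|$ you use, but this needs to be verified rather than assumed.
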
 

The proof will be given in the following sections.

\section{The Dirichlet Theorem for PSL(2)}

Let $\La$ denote a lattice in $\PSL_2(\R)$ and let $\La'\subset \La$ be a normal sublattice.

\begin{definition}
The finite group $\La/\La'$ acts unitarily on the Hilbert space $L^2(\La'\bs\H)$.
Let $\chi$ be an irreducible representation of  $\La/\La'$ and let $L^2(\La\bs \H)(\chi)$ denote the $\chi$-isotype.
We shall also consider $\chi$ as a representation of $\La$ whose kernel contains $\La'$.
As the hyperbolic Laplacian $\Delta$ is invariant, it induces a densely defined operator $\Delta_\chi$ on $L^2(\La\bs\H)(\chi)$.

Let $\la_{\chi,1}\le\dots\le\la_{\chi,n(\chi)}$ be the Laplace eigenvalues in $(0,3/16)$ on $L^2(\La\bs\H)(\chi)$.
Let $s_j=s_{\chi,j}>\frac34$ be defined by  $\la_j=s_j(1-s_j)$.
\end{definition}

\begin{lemma}\label{lem2.4}
Let $\La$ be a congruence group in $\PSL_2(\Z)$ and suppose that $\ker(\chi)$ is a congruence group, then the interval $\[\frac7{10},1\]$ does not contain any $s_{\chi,j}$.
\end{lemma}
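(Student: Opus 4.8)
The plan is to reduce the statement to the known bound of Selberg–Kim on the smallest non-trivial Laplace eigenvalue for congruence subgroups. Recall that Lemma~2.4 concerns eigenvalues $\la_{\chi,j}$ lying in $(0,3/16)$, i.e. $s_{\chi,j}\in(3/4,1)$; the claim is that in fact no such $s_{\chi,j}$ reaches as high as $7/10$. Since $7/10 < 3/4$, the interval $[7/10,1]$ meets the range of the $s_{\chi,j}$ only in $[7/10, 1)$ — actually it contains all of them unless they are bounded away from $1$ — so the real content is that every exceptional eigenvalue satisfies $\la_{\chi,j} > 7/10 \cdot 3/10 = 21/100$, equivalently $s_{\chi,j} < 7/10$ is false and we need $s_{\chi,j}>7/10$... let me restate cleanly: we must show $s_{\chi,j} \notin [7/10,1]$, and since by definition $s_{\chi,j}>3/4>7/10$, this amounts to showing $s_{\chi,j}\notin[3/4,1]$ can't happen, i.e. there are \emph{no} exceptional eigenvalues at all on the relevant spaces. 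Hence the lemma is really the assertion $\la_1(\La'\bs\H) \ge 3/16$ — or rather the slightly weaker $>21/100$ would already do, but $3/16$ is what is available.

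First I would observe that $L^2(\La\bs\H)(\chi)$, as a $\Delta$-invariant subspace of $L^2(\La'\bs\H)$, has its spectrum contained in the spectrum of $\Delta$ on $L^2(\La'\bs\H)$. Since $\ker(\chi)\supseteq\La'$ and $\ker(\chi)$ is assumed to be a congruence group, and $\La$ is a congruence group in $\PSL_2(\Z)$, the group $\La'$ need not itself be congruence, but $L^2(\La\bs\H)(\chi)$ embeds $\Delta$-equivariantly into $L^2(\ker(\chi)\bs\H)$ — indeed the $\chi$-isotype for a representation factoring through $\La/\ker(\chi)$ already appears at the level of the congruence group $\ker(\chi)$. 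So it suffices to bound $\la_1$ from below on $L^2(N\bs\H)$ for the congruence group $N=\ker(\chi)$.

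Next I would invoke the Selberg eigenvalue bound in the form proved for all congruence subgroups of $\SL_2(\Z)$ (and hence $\PSL_2(\Z)$): Selberg's $3/16$ theorem gives $\la_1 \ge 3/16$, equivalently there is no $s_j$ in the open interval $(3/4,1)$ coming from an $L^2$-eigenvalue below $3/16$; the stronger Kim–Sarnak bound $\la_1\ge 975/4096$ would give an even smaller interval, but $3/16$ suffices for the claimed $[7/10,1]$. Combining with the previous paragraph, every $s_{\chi,j}$ defined via $\la_{\chi,j}\in(0,3/16)$ satisfies — vacuously — no constraint, since there are no such eigenvalues; thus the set of $s_{\chi,j}$ is empty and a fortiori disjoint from $[7/10,1]$.

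The one genuine subtlety, and the step I would be most careful about, is the passage from $\La'$ (which is only guaranteed to be \emph{normal} in $\La$, not congruence) to $\ker(\chi)$ (which \emph{is} congruence by hypothesis): one must check that the $\chi$-isotype $L^2(\La\bs\H)(\chi)$ really does inject into $L^2(\ker(\chi)\bs\H)$ compatibly with $\Delta$. This is immediate once one notes that a function in the $\chi$-isotype transforms under $\La$ by a character-like rule that is trivial on $\ker(\chi)$, hence descends to $\ker(\chi)\bs\H$; the Laplacian commutes with the whole $\La$-action, so eigenvalues are preserved. With that in hand the lemma follows from Selberg's bound applied to the congruence group $\ker(\chi)$.
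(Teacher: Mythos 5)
Your argument is correct and is essentially the paper's proof: both reduce to the spectral gap for the congruence group $\ker(\chi)$ by noting that the $\chi$-isotype $L^2(\La\bs\H)(\chi)$ sits $\Delta$-equivariantly inside $L^2(\ker(\chi)\bs\H)$ (the paper phrases this as the isotypic decomposition of $L^2(\ker(\chi)\bs\H)$ under $\La/\ker(\chi)$). The only difference is the bound cited: the paper uses the Luo--Rudnick--Sarnak bound $\la_1\ge 21/100$, which is exactly what the endpoint $7/10$ reflects, whereas you fall back on Selberg's $3/16$; since the $s_{\chi,j}$ are only defined for eigenvalues in $(0,3/16)$, either suffices for the statement as written.
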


\begin{proof}
The assertion of the lemma is equivalent to saying that the Laplace operator $\Delta_\chi$ on   $L^2(\La\bs \H)(\chi)$ has no eigenvalue in $\[0,\frac{21}{100}\]$. We say, it has no critical eigenvalue.
Let $\La'$ be the kernel of $\chi$. In \cite{LRS} it is shown that the Laplacian on $L^2(\La'\bs\H)$ has no critical eigenvalue.
The finite group $\La/\La'$ acts on $L^2(\La'\bs \H)$ and so the latter decomposes into isotypes
$$
L^2(\La'\bs \H)=\bigoplus_{\chi\in\widehat{\La/\La'}}L^2(\La\bs\H)(\chi).
$$
Since the Laplacian has no critical eigenvalue in $L^2(\La'\bs\H)$, it has no critical eigenvalue on each $L^2(\La\bs\H,\chi)$.
\end{proof}

\begin{definition}
Let   $\La\subset G$ be a lattice, $\La'\subset \La$ be a normal subgroup of finite index and $\CC$ is a conjugacy class in $\La/\La'$.
Define
$$
\psi_{\La,\CC}(x)=\sum_{\substack{\ell(\ga)\le\log x\\ [\ga]\equiv \CC\mod\La'}}\ell(\ga_0).
$$
\end{definition}

\begin{theorem}[Dirichlet Theorem]\label{thmDiri}
Let $\La',\La$ be lattices in  $\PSL_2(\R)$, where $\La'$ is normal in $\La$. Let $\CC$ denote a conjugacy class in the finite group $\La/\La'$.
Then
$$
\psi_{\La,\CC}(x)=\frac{|\CC|}{|\La/\La'|}\[x+ \sum_{\chi}\tr\chi(\CC)\sum_{\substack{j=1,\dots,n(\chi)\\ s_{\chi,j}<1}}\frac{x^{s_{\chi,j}}}{s_{\chi,j}}\]+O\(x^{\frac34}(\log x)^{\frac12}\).
$$
where the sum runs over all irreducible representations $\chi$ of the finite group $\La/\La'$.
If $\La$ and $\La'$ are congruence subgroups of $\PSL_2(\Z)$, then
$$
\psi_{\La,\CC}(x)=\frac{|\CC|}{|\La/\La'|}x+O\(x^{\frac34}(\log x)^{\frac12}\).
$$
\end{theorem}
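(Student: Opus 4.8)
The plan is to expand the indicator of the conjugacy class $\CC$ into the irreducible characters of the finite group $H=\La/\La'$, apply a prime geodesic theorem twisted by each character, and reassemble. Concretely, fix $c\in\CC$; column orthogonality of the characters of $H$ gives, for $g\in H$,
$$
\1_{[g]=\CC}=\frac{|\CC|}{|H|}\sum_\chi\overline{\tr\chi(c)}\;\tr\chi(g),
$$
the sum being over the finitely many irreducible representations $\chi$ of $H$. Inflating each $\chi$ along $\La\to H$ (so that $\La'\subseteq\ker\chi$) turns it into a finite-dimensional unitary representation of $\La$ with $\tr\chi$ a class function; applying the identity to the image in $H$ of a hyperbolic $\ga\in\La$ and summing over the hyperbolic conjugacy classes one gets
$$
\psi_{\La,\CC}(x)=\frac{|\CC|}{|H|}\sum_\chi\overline{\tr\chi(\CC)}\;\psi_\chi(x),
\qquad
\psi_\chi(x):=\sum_{\ell(\ga)\le\log x}\tr\chi(\ga)\,\ell(\ga_0).
$$
Since $\chi$ is unitary, $\tr\chi(\ga^{-1})=\overline{\tr\chi(\ga)}$, while $[\ga]\mapsto[\ga^{-1}]$ is a length- and primitivity-preserving involution of the set of hyperbolic classes; hence each $\psi_\chi(x)$ is real and $\psi_{\overline\chi}=\psi_\chi$, so re-indexing $\chi\leftrightarrow\overline\chi$ replaces $\overline{\tr\chi(\CC)}$ by $\tr\chi(\CC)$.

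For each $\chi$ I would then invoke the prime geodesic theorem for $\La$ twisted by the unitary representation $\chi$ (Hejhal \cite{Hej2}; the unitary-twist form being a routine extension of the multiplier-system case, as the trace formula is additive in the representation) -- equivalently, the prime geodesic theorem on $\La'\bs\H$ restricted to the $\chi$-isotype. Forming the twisted Selberg zeta function $Z_\chi$ and integrating $(Z_\chi'/Z_\chi)(s)\,x^s/s$ along a suitable contour, this reads
$$
\psi_\chi(x)=\delta_{\chi,\triv}\,x+\sum_{j=1}^{n(\chi)}\frac{x^{s_{\chi,j}}}{s_{\chi,j}}+O\big(x^{3/4}(\log x)^{1/2}\big),
$$
where $\delta_{\chi,\triv}$ equals $1$ for the trivial representation and $0$ otherwise: the term $\delta_{\chi,\triv}x$ is the residue at $s=1$, present exactly when the constant function lies in $L^2(\La\bs\H)(\chi)$, i.e.\ -- since $\chi$ is irreducible -- exactly when $\chi$ is trivial. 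That the remaining contributions (the continuous spectrum and the Eisenstein/scattering terms in the non-cocompact case, the Laplace eigenvalues $\ge 3/16$, and the trivial zeros) are all $O(x^{3/4}(\log x)^{1/2})$ is exactly the content of Hejhal's asymptotic together with Weyl's law; the cutoff $\la_{\chi,j}\in(0,3/16)$ in the definition of the $s_{\chi,j}$ is made precisely so that an eigenvalue $\la\ge 3/16$, contributing a term $x^s/s\ll x^{3/4}$ with $s\le 3/4$, is absorbed by the error, while only finitely many eigenvalues lie below $1/4$. Substituting into the character expansion, using $\tr\triv(\CC)=1$, and collecting the finitely many error terms (each $\ll x^{3/4}(\log x)^{1/2}$, as $|\tr\chi(\CC)|\le\dim\chi$), gives the first formula of the theorem; the restriction $s_{\chi,j}<1$ displayed there is automatic since $\la_{\chi,j}>0$.

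Finally, suppose $\La$ and $\La'$ are congruence subgroups of $\PSL_2(\Z)$. Then $\La'$ contains a principal congruence subgroup, hence so does $\ker\chi\supseteq\La'$ for every $\chi$, so $\ker\chi$ is a congruence group (for $\chi=\triv$ one has $\ker\chi=\La$, congruence by hypothesis). By Lemma \ref{lem2.4} no $s_{\chi,j}$ lies in $[7/10,1]$; but every $s_{\chi,j}$ satisfies $3/4<s_{\chi,j}<1$ and so lies in $(3/4,1)\subset[7/10,1]$, forcing $n(\chi)=0$ for all $\chi$. Hence the double sum is empty and $\psi_{\La,\CC}(x)=\frac{|\CC|}{|\La/\La'|}\,x+O(x^{3/4}(\log x)^{1/2})$. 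The analytic substance sits entirely in Hejhal's twisted prime geodesic theorem, which I use as a black box; the part that needs care is quoting it in precisely the right form -- in particular that the continuous-spectrum contribution in the non-cocompact case really does stay within $O(x^{3/4}(\log x)^{1/2})$ -- and arranging the complex-conjugation re-indexing so that the main term emerges with the correct rational coefficient $|\CC|/|\La/\La'|$.
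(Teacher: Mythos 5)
Your proposal is correct and follows essentially the same route as the paper: expand $\1_\CC$ in irreducible characters of $\La/\La'$, apply Hejhal's character-twisted prime geodesic theorem to each $\psi_\chi$, isolate the $s=1$ term as the trivial-representation contribution, and kill the exceptional terms in the congruence case via Lemma \ref{lem2.4}. Your explicit handling of the complex conjugate $\overline{\tr\chi(\CC)}$ via the involution $[\ga]\mapsto[\ga^{-1}]$ is a small point of extra care that the paper's proof passes over silently, but it does not change the argument.
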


\begin{proof}
On the finite group $\La/\La'$ we install the counting measure divided by $|\La/\La'|$.
Then the functions $\tr\chi$ with $\chi\in\widehat{\La/\La'}$ form an orthonormal basis of $L^2\((\La/\La')/\text{conjugation}\)$. For a given conjugacy class $\CC$ we write $\tr\chi(\CC)$ for the trace $\tr\chi(x)$ for any member $x$ of $\CC$.
We get
\begin{align*}
\1_\CC(y)&=\sum_\chi\tr\chi(y)\ \sp{\1_\CC,\chi}=\sum_{\chi}\tr\chi(y)\frac{|\CC|}{|\La/\La'|}\tr\chi(\CC).
\end{align*}
This implies
$
\psi_{\La,\CC}=\frac{|\CC|}{|\La/\La'|}\sum_\chi\tr\chi(y)\ \psi_{\La,\chi}.
$
Write 
$
\psi_{\La,\chi}(x)=\sum_{\ell(\ga)\le\log x}\ell(\ga_0)\ \tr\chi(\ga),
$
where the sum runs over all hyperbolic conjugacy classes $[\ga]$ in $\La$.
In \cite{Hej2}, Theorem 3.4 on page 474 we find
$
\psi_{\La,\chi}(x)=\sum_{j=1}^{n(\chi)}\frac{x^{s_j}}{s_j}+O\(x^{\frac34}(\log x)^\frac12\).
$
The eigenvalue $\la=0$, which corresponds to $s=1$ only occurs for $\chi=1$, where it has multiplicity 1.
Therefore we can split off the term of this eigenvalue and the claim follows.
In the case of congruence subgroups the additional claim is implied by Lemma  \ref{lem2.4}.
\end{proof}

\section{Lift to SL(2)}

We need to clarify the relation between conjugacy classes in $\SL(2)$ and $\PSL(2)$.
We first stick to the general situation of arbitrary lattices and  specialise to the arithmetic case in the next section.

\begin{definition}
Let $\Ga\subset\SL_2(\R)$ denote the preimage of the lattice $\La\subset\PSL_2(\R)$.
Assume that there exists a normal subgroup $\Ga'\subset\Ga$ such that the projection $\SL_2\to\PSL_2$ maps $\Ga'$ isomorphically to $\La'$.
It then follows that $-1\notin\Ga'$ and the projection $\pi:\Ga/\Ga'\to \La/\La'$  has kernel $\{\pm 1\}$.
For each conjugacy class $\CD$ of $\Ga/\Ga'$, let $n(\CD)$ denote the number of classes in $\Ga/\Ga'$ which map to the conjugacy class $\pi(\CD)\subset \Ga/\Ga'$.
This means that $n(\CD)=1$ if $\CD=-\CD$ and it is $2$ otherwise.
We define
$$
\psi_{\Ga,\CD}(x)=\sum_{\substack{[\la]\equiv \CD\mod\Ga'\\ \ell(\la)\le\log x}}\ell(\la_0),
$$
where the sum now runs over hyperbolic conjugacy classes $[\la]$ in $\Ga$ and $\la_0$ is a generator (modulo $\pm 1$) of the centraliser $\Ga_\la$ of $\la$ in $\Ga$.
Over every hyperbolic class $[\ga]$ in $\La$ there lie two conjugacy classes $[\la]$ in $\Ga$, since $\tr(\la)\ne 0$. 
We infer
$$
\psi_{\Ga,\CD}=\psi_{\Ga,-\CD}=\(3-n(\CD)\)\psi_{\La,\pi(\CD)}.
$$
The cardinalities of the respective conjugacy classes are related by 
$$
|\CD|=\left\{\begin{array}{cl} |\pi(\CD)|&\text{if }n(\CD)=2,\\ 2|\pi(\CD)|&\text{if }n(\CD)=1,\end{array}\right\}=\(3-n(\CD)\)\ |\pi(\CD)|.
$$
Write $V(\CD)=\(3-n(\CD)\)\frac{|\pi(\CD)|}{|\La/\La'|}$. Then we get
\begin{align*}
V(\CD)=\frac{|\CD|}{|\Ga/\Ga'|/2}=\frac{2}{|\cent(A)|},\tag*{$(*)$}
\end{align*}
where $\cent(A)$ is the centraliser of $A\in \CD$ in the group $\Ga/\Ga'$.
\end{definition}

\begin{corollary}\label{cor3.3}
Let $\CD$ denote a conjugacy class in the finite group $\Ga/\Ga'$.
Then
$$
\psi_{\Ga,\CD}(x)=\frac{2|\CD|}{|\Ga/\Ga'|}\[x+ \sum_{\chi\ne 1}\tr\chi(\CC)\sum_{j=1}^{n(\chi)}\frac{x^{s_{\chi,j}}}{s_{\chi,j}}\]+O\(x^{\frac34}(\log x)^{\frac12}\).
$$
In particular, if $\Ga=\SL_2(\Z)$ and $\Ga'=\ker\(\Ga\to\SL_2(\F_p)\)$ for a prime $p\ge 3$, then
$$
\psi_{\Ga,\CD}(x)=\frac{2|\CD|}{|\Ga/\Ga'|}\ x +O\(x^{\frac34}(\log x)^{\frac12}\).
$$
\end{corollary}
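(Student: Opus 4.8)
The plan is to obtain Corollary~\ref{cor3.3} directly from the Dirichlet Theorem (Theorem~\ref{thmDiri}), transported from $\La$ to $\Ga$ by the bookkeeping already assembled in the Definition preceding the corollary. Nothing analytic is added; the content is purely the passage between conjugacy classes in $\SL_2$ and in $\PSL_2$, together with the observation that this passage costs nothing in the error term.

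First I would recall the two identities already in place: $\psi_{\Ga,\CD}=\(3-n(\CD)\)\,\psi_{\La,\pi(\CD)}$ and $|\CD|=\(3-n(\CD)\)\,|\pi(\CD)|$, the latter yielding $(*)$, namely $\(3-n(\CD)\)\frac{|\pi(\CD)|}{|\La/\La'|}=\frac{2|\CD|}{|\Ga/\Ga'|}$. Substituting the asymptotic of Theorem~\ref{thmDiri} for $\psi_{\La,\pi(\CD)}(x)$ and multiplying by $\(3-n(\CD)\)$: since $\(3-n(\CD)\)\in\{1,2\}$, the error term $O\(x^{\frac34}(\log x)^{\frac12}\)$ is unaffected, while the main term becomes $\frac{2|\CD|}{|\Ga/\Ga'|}\[x+\sum_\chi\tr\chi(\pi(\CD))\sum_j\frac{x^{s_{\chi,j}}}{s_{\chi,j}}\]$ by $(*)$. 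Here $\chi$ runs over $\widehat{\La/\La'}$, equivalently over those $\chi\in\widehat{\Ga/\Ga'}$ with $\chi(-1)=\Id$. For $\chi\ne\1$ the eigenvalue $0$ does not occur, so all $s_{\chi,j}$ lie in $\(\frac34,1\)$ and the restriction $s_{\chi,j}<1$ is automatic, which is why the inner sum may be written $\sum_{j=1}^{n(\chi)}$; the trivial representation contributes the displayed $x$ (the $s=1$ term, split off as in the proof of Theorem~\ref{thmDiri}), together with a $\CD$-independent sum over the Laplace eigenvalues of $\La$ in $(0,3/16)$, which vanishes as soon as $\La$ has no such eigenvalue.

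For the arithmetic specialisation take $\Ga=\SL_2(\Z)$ and $\Ga'=\ker\(\SL_2(\Z)\to\SL_2(\F_p)\)=\Ga(p)$ with $p\ge 3$. I must check the hypotheses of the preceding Definition: $-1\notin\Ga'$, since $-\Id\equiv\Id\ (p)$ forces $p=2$ -- this is exactly where $p\ge 3$ enters, and it is why $p=2$ is treated separately in Theorem~\ref{thmMain}. Hence $\pi:\Ga/\Ga'\to\La/\La'$ has kernel $\{\pm1\}$ and $\Ga'$ maps isomorphically onto $\La'\subset\PSL_2(\Z)$; both $\La=\PSL_2(\Z)$ and $\La'$ (the image of $\Ga(p)$) are congruence subgroups of $\PSL_2(\Z)$, so the congruence case of Theorem~\ref{thmDiri} gives $\psi_{\La,\pi(\CD)}(x)=\frac{|\pi(\CD)|}{|\La/\La'|}x+O\(x^{\frac34}(\log x)^{\frac12}\)$. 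Multiplying by $\(3-n(\CD)\)$ and using $(*)$ yields $\psi_{\Ga,\CD}(x)=\frac{2|\CD|}{|\Ga/\Ga'|}x+O\(x^{\frac34}(\log x)^{\frac12}\)$, as claimed.

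Since Theorem~\ref{thmDiri} is granted, there is no genuine analytic obstacle here; the real weight sits in that theorem (Hejhal's estimate, and Lemma~\ref{lem2.4} for the congruence part). The two points requiring care are the verification that $-1\notin\Ga'$ -- without it the whole $\SL_2/\PSL_2$ dictionary of the preceding Definition collapses -- and the observation that $3-n(\CD)$ is bounded, so that transporting the estimate from $\La$ to $\Ga$ neither changes the exponent $\frac34$ nor the power of $\log x$. Alternatively, one could first note that a $\chi\in\widehat{\Ga/\Ga'}$ with $\chi(-1)=-\Id$ contributes $0$ to the geodesic count over $\Ga$, because the fixed-point-free involution $[\la]\leftrightarrow[-\la]$ on hyperbolic classes (here $[\la]\ne[-\la]$ since $\tr\la\ne0$) cancels its terms in pairs; expanding $\1_\CD$ in characters then reproduces $\psi_{\Ga,\CD}=\(3-n(\CD)\)\psi_{\La,\pi(\CD)}$ from the spectral side.
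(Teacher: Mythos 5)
Your proposal is correct and matches the paper's (implicit) argument: the corollary is obtained by multiplying the asymptotic of Theorem~\ref{thmDiri} for $\psi_{\La,\pi(\CD)}$ by the bounded factor $3-n(\CD)\in\{1,2\}$ and invoking the identity $(*)$, with the arithmetic case following from the congruence part of that theorem once one checks, as you do, that $-1\notin\Ga'$ for $p\ge 3$ and that $\La=\PSL_2(\Z)$ and the image of $\Ga(p)$ are congruence groups. Your side remark that the first display silently drops the $\chi=1$ contribution from eigenvalues of $\La$ in $(0,3/16)$ is a fair observation about the statement, but it is harmless in the congruence specialisation actually used.
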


\section{Congruence groups}

In this section we fix a prime $p$ and consider the case $\Ga=\SL_2(\Z)$ and $\Ga'=\ker\(\SL_2(\Z)\to\SL_2(\F_p)\)$.
As the latter map is surjective, we get $\Ga/\Ga'=\SL_2(\F_p)$ and $\La/\La'=\PSL(\F_p)$. 
The following lemma is an exercise in linear algebra.
For the convenience of the reader, we include a proof.

\begin{lemma}\label{lem4.1}
Let $p\ge 3$.
In the following table, we give a list of the conjugacy classes in $\SL_2(\F_p)$.
We write $A$ for a representative of the class and $a=\tr(A)$.
In each row, we denote the number of  classes of the respective form by $d$.
In the table, $\al$ is an arbitrary element of $\F_p^\times$.
$$
\begin{array}{|c|c|c|c|c|}
\CD=[A]&d&|\cent(A)|&\[\frac{a^2-4}p\]\\
\hline
\smat\mu\ \ {1/\mu}, \ 1\ne \mu\in\F_p^\times&\frac{p-3}2&p-1&1
\\ \hline
\smat a 1{-1} \ , \ a=\mu+\frac1\mu,\ \mu\notin\F_{p}\ 
&\frac{p-1}2&p+1&-1
\\ \hline
\smat{\pm 1}\ \ {\pm 1}&2&p(p^2-1)&0
\\ \hline
\smat {\pm 1}\al\ {\pm 1}&4&2p&0
\\ \hline
\end{array}
$$
If $p=2$, the group $\SL_2(\F_p)$ has 6 elements. There are 3 conjugacy classes, two with $a=0$, making up 4 elements and one with $a=1$ consisting of two elements.
\end{lemma}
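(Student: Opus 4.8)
The plan is to enumerate the conjugacy classes of $\SL_2(\F_p)$ by sorting elements according to the characteristic polynomial $X^2 - aX + 1$, i.e. according to the trace $a = \tr(A)$, since $\det A = 1$ is fixed. The eigenvalues are $\mu, 1/\mu$ with $\mu + 1/\mu = a$, and the discriminant of the characteristic polynomial is $a^2 - 4$, so the splitting behaviour is governed exactly by the Legendre symbol $\left(\frac{a^2-4}{p}\right)$. I would treat three cases: (i) $a^2 - 4$ a nonzero square, so $A$ has two distinct eigenvalues $\mu \ne 1/\mu$ in $\F_p^\times$ and is diagonalisable over $\F_p$; (ii) $a^2 - 4$ a nonsquare, so $A$ has eigenvalues in $\F_{p^2} \setminus \F_p$ and is a nonsplit regular semisimple element; (iii) $a^2 - 4 = 0$, i.e. $a = \pm 2$, $\mu = \pm 1$, where $A$ is either central ($\pm I$) or a nontrivial unipotent-times-$\pm I$ element.

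First I would handle case (i): a diagonalisable element is conjugate to $\diag(\mu, 1/\mu)$, and since $\diag(\mu,1/\mu)$ and $\diag(1/\mu,\mu)$ are conjugate (by $\smat 0{1}{-1}0$), each such class is pinned down by the unordered pair $\{\mu, 1/\mu\}$ with $\mu \ne \pm 1$; the number of these is $\frac{p-3}{2}$. The centraliser of $\diag(\mu,1/\mu)$ with $\mu^2 \ne 1$ is the diagonal torus, of order $p-1$. For case (ii) I would pick the rational canonical form $\smat a1{-1}0$ (characteristic polynomial $X^2 - aX + 1$); two such matrices with the same $a$ are conjugate because a regular element is determined up to conjugacy by its characteristic polynomial over a field, and the condition $a^2-4$ a nonsquare gives $\frac{p-1}{2}$ values of $a$ (one checks $a = \pm 2$ are excluded since $0$ is a square). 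Its centraliser is $\F_p[A]^\times \cong \F_{p^2}^\times$ intersected with $\SL_2$, a nonsplit torus of order $p+1$. Case (iii): the two central classes $\{I\}$, $\{-I\}$ each have centraliser all of $\SL_2(\F_p)$, of order $p(p^2-1)$; for $a = \pm 2$ with $A \ne \pm I$, $A = \pm u$ with $u$ unipotent $\ne I$, and the unipotent elements split into two classes according to whether the off-diagonal entry is a square or a nonsquare (the standard computation: conjugating $\smat 1101$ by $\diag(t,1/t)$ scales the entry by $t^2$, and by $\smat 0{1}{-1}0$ sends $\smat 11 01$ to $\smat 1{-1}01$), giving $4$ classes $\smat{\pm1}{\alpha}0{\pm1}$ with centraliser $\{\pm u^k\}$ of order $2p$.

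To finish I would verify the class equation: $\frac{p-3}{2}(p-1) + \frac{p-1}{2}(p+1) + 2\cdot 1 + 4\cdot\frac{p(p^2-1)}{2p}\cdot\frac{1}{?}$ — more cleanly, sum $\sum d \cdot |\CD| = \sum d\cdot |\SL_2(\F_p)|/|\cent(A)|$ and check it equals $|\SL_2(\F_p)| = p(p^2-1)$, which is a reassuring arithmetic identity and confirms no class has been missed or double-counted. Finally the $p=2$ case: $\SL_2(\F_2) \cong S_3$ has order $6$, and one just lists the identity, the three transpositions (order-$2$ elements, trace $0$), and the two $3$-cycles (trace $1$ in $\F_2$), giving the stated count; here $\PSL_2(\F_2) = \SL_2(\F_2)$ since $-I = I$.

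The only real subtlety — not an obstacle so much as a point requiring care — is the eigenvalue bookkeeping in case (i): ensuring that $\mu$ and $1/\mu$ really do give the same class and that one correctly excludes $\mu = \pm 1$ (which land in case (iii)), so that the count is $\frac{p-3}{2}$ and not $\frac{p-1}{2}$ or $p-1$. Everything else is the standard rational-canonical-form argument plus the familiar split of unipotents into two classes mod squares, and the class-equation check catches any slip.
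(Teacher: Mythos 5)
Your proposal follows essentially the same route as the paper: sort elements by the splitting behaviour of the characteristic polynomial $X^2-aX+1$, compute centralisers in each case, and count classes; the class-equation check at the end is a nice extra safeguard the paper does not include, and your $p=2$ discussion matches. The one step that needs to be made explicit is in case (ii): the fact that a regular element is determined up to conjugacy by its characteristic polynomial is a statement about $\GL_2(\F_p)$-conjugacy, and $\GL_2$-conjugacy does not in general imply $\SL_2$-conjugacy --- indeed your own case (iii) exhibits transvections that are $\GL_2$-conjugate but fall into two distinct $\SL_2$-classes. To conclude that the nonsplit semisimple $\GL_2$-class with trace $a$ does not split in $\SL_2(\F_p)$, you must observe (as the paper does) that the determinant restricted to the $\GL_2$-centraliser $\F_p[A]^\times\cong\F_{p^2}^\times$ is the norm map, which surjects onto $\F_p^\times$; you already use this surjectivity implicitly when you compute $|\cent(A)|=(p^2-1)/(p-1)=p+1$, so the repair is immediate, but as written the conjugacy claim in case (ii) is unjustified. (The analogous point in case (i) is covered, since your conjugating matrix $\smat 01{-1}0$ lies in $\SL_2$ and the diagonalising matrix can be normalised to determinant one using the split torus.)
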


\begin{proof}
If $A$ has only one eigenvalue $\mu$ in the algebraic closure of $\F_p$, then $\mu^2=1$ and so $\mu=\pm 1$ and we get the last two lines of the table.
In particular, for the last line recall that conjugation  changes $\al$ to $\al\be^2$ for some $\be\in\F_p^\times$ and the subgroup of squares has index two in $\F_p^\times$.

If $A$ has two different eigenvalues $\mu,1/\mu$, then $\mu+1/\mu=a$ results in a quadratic equation whose solutions lie in $F_p$ if and only if $a^2-4$ is a square in $\F_p^\times$.
Line one deals with the case $\mu\in\F_p^\times$, which can take any value in $\F_p^\times$ except for $\pm 1$ and we can exchange $\mu$ and $1/\mu$,  leading to the value of $d$. The centraliser is the torus of diagonal matrices.

Finally, if the eigenvalues are not in $\F_p$, they lie in the unique quadratic extension $\F_{p^2}$ and the trace $a$ determines the eigenvalues up to $\mu\mapsto 1/\mu$. 
Let $A$ be the matrix as in line two.
The centraliser $\CA$ of $A$ in the matrix algebra $\M_2(\F_p)$ has dimension 2 and is a skew field, since $a$ is a non-square. Hence the unit group $\CA^\times$ has $(p^2-1)$ elements. The determinant $\det:\CA^\times\to\F_p^\times$ is given by a non-degenerate quadratic form, hence is surjective \cite{Cass}, and so its kernel $\CA^1$ has $(p^2-1)/(p-1)=p+1$ elements.
Finally, to determine $d$ in this case, note that, as the determinant $\CA^\times\to\F_p^\times$ is surjective, any $B\in\SL_2(\F_p)$, which is conjugate to $A$ in $\GL_2(\F_{p^2}$, also is conjugate to $A$ in $\SL_2(\F_p)$.
Therefore the number $a$ determines the conjugacy class, as $\mu\ne1/\mu$.
The trace  $a=\mu+1/\mu\in\F_p$ is fixed under the  Frobenius and as the trace determines the eigenvalues, the Frobenius must interchange $\mu$ and $1/\mu$, hence $\mu^{p+1}=1$. In the cyclic group $\F_{p^2}^\times$, there are $p+1$ possibilities for $\mu$ and again we can switch $\mu$ and $1/\mu$, so $d=(p-1)/2$.
\end{proof}

Note that
$$
\psi_a^\pm=\frac12\sum_{\substack{\CD\subset\Ga/\Ga'\\ \tr(\CD)=\pm a}}\psi_{\Ga,\CD}.
$$
With this, Theorem \ref{thmMain}  follows from 
Corollary \ref{cor3.3}
and Lemma \ref{lem4.1}.

\begin{bibdiv} \begin{biblist}

\bib{CHK}{article}{
   author={Chatzakos, Dimitrios},
   author={Harcos, Gergely},
   author={Kaneko, Ikuya},
   title={The prime geodesic theorem in arithmetic progressions},
   journal={Int. Math. Res. Not. IMRN},
   date={2024},
   number={20},
   pages={13180--13190},
   issn={1073-7928},
   doi={10.1093/imrn/rnae198},
}

\bib{Cass}{article}{
author={Casselman, William},
title={Quadratic forms over finite fields}
eprint={https://www.math.ubc.ca/~cass/siegel/FiniteFields.pdf}
}

\bib{DH}{article}{
   author={Deitmar, Anton},
   title={Class numbers of orders in cubic fields},
   journal={J. Number Theory},
   volume={95},
   date={2002},
   number={2},
   pages={150--166},
   issn={0022-314X},
}

\bib{Hej2}{book}{
   author={Hejhal, Dennis A.},
   title={The Selberg trace formula for ${\rm PSL}(2,\,{\bf R})$. Vol. 2},
   series={Lecture Notes in Mathematics},
   volume={1001},
   publisher={Springer-Verlag, Berlin},
   date={1983},
   pages={viii+806},
   isbn={3-540-12323-7},
   doi={10.1007/BFb0061302},
}

\bib{LRS}{article}{
   author={Luo, W.},
   author={Rudnick, Z.},
   author={Sarnak, P.},
   title={On Selberg's eigenvalue conjecture},
   journal={Geom. Funct. Anal.},
   volume={5},
   date={1995},
   number={2},
   pages={387--401},
   issn={1016-443X},
   doi={10.1007/BF01895672},
}

\bib{SY}{article}{
   author={Soundararajan, K.},
   author={Young, Matthew P.},
   title={The prime geodesic theorem},
   journal={J. Reine Angew. Math.},
   volume={676},
   date={2013},
   pages={105--120},
   issn={0075-4102},
   review={\MR{3028757}},
   doi={10.1515/crelle.2012.002},
}

\end{biblist} \end{bibdiv}

{\small Mathematisches Institut\\
auf der Morgenstelle 10\\
72076 T\"ubingen\\
Germany\\
\tt deitmar@uni-tuebingen.de}

\today

\end{document}